\documentclass[11pt]{amsart}
\usepackage[T1]{fontenc}
\usepackage{lmodern}
\usepackage{amsmath,amssymb,amsthm,booktabs}
\usepackage{microtype}
\usepackage[hidelinks]{hyperref}
\newtheorem{theorem}{Theorem}[section]
\newtheorem{lemma}[theorem]{Lemma}
\newtheorem{proposition}[theorem]{Proposition}
\theoremstyle{remark}
\newtheorem{remark}[theorem]{Remark}
\newcommand{\bd}{\partial}
\newcommand{\card}[1]{\lvert#1\rvert}
\newcommand{\cl}{\operatorname{cl}}
\newcommand{\ZF}{Z}

\title[Zero forcing in $P(n,3)$ and $P(n,4)$]{Sharp zero forcing numbers for the generalized Petersen graphs $P(n,3)$ and $P(n,4)$}
\author{Arnav Krishnan}
\address{Arnav Krishnan}
\email{arnav.s.krishnan@gmail.com}
\author{Haobo Ma}
\address{Haobo Ma, ChronoAI Pte Ltd; The Omega Institute}
\urladdr{https://orcid.org/0009-0008-4944-977X}
\author{Wenlin Zhang}
\address{Wenlin Zhang, National University of Singapore; The Omega Institute}
\email{e1327962@u.nus.edu}
\urladdr{https://orcid.org/0009-0005-2414-7968}
\date{October 14, 2026}
\subjclass[2020]{05C50, 05C57, 05C76}
\keywords{Zero forcing number, generalized Petersen graph, vertex boundary, fort, computer-assisted proof, Lean formalization}
\hypersetup{pdftitle={Sharp zero forcing numbers for the generalized Petersen graphs P(n,3) and P(n,4)},pdfauthor={Arnav Krishnan, Haobo Ma, Wenlin Zhang}}
\begin{document}
\begin{abstract}
We prove that $\ZF(P(n,3))=8$ for every $n\ge13$ and
$\ZF(P(n,4))=10$ for every $n\ge18$, with both stabilization thresholds
sharp, and give the exact values for all smaller admissible $n$.
The first equality proves Krishnan's Conjecture~5; his Proposition~4
supplies its upper bound. The lower bounds use the external boundary of
the first forcing sources: ten vertices have boundary at least eight in
$P(n,3)$ for $n\ge14$, and sixteen vertices have boundary at least ten in
$P(n,4)$ for $n\ge20$. Deletion within an empty column interval reduces
these assertions to finite checks, and forts settle the remaining cases.
The $k=3$ stabilization theorem has a Lean~4 formalization; the $k=4$ proof uses 89
boundary formulas and 11 fort formulas with separately checked DRAT
certificates, together with explicit forcing sequences.
\end{abstract}
\maketitle

\section{Introduction}
For a finite simple graph $G$, start with a set $S$ of black vertices and
color the remaining vertices white. A black vertex with exactly one white
neighbor may force that neighbor black. The set $S$ is a \emph{zero
forcing set} if this rule colors the whole graph, and the \emph{zero
forcing number} $\ZF(G)$ is the smallest cardinality of such a set.
Write $\cl(S)$ for its closure under the rule.

For $n>2k$, the generalized Petersen graph $P(n,k)$ has vertices
$u_i,v_i$, indexed by $i\in\mathbb Z/n\mathbb Z$, and edges
\[
 u_i u_{i+1},\qquad u_i v_i,\qquad v_i v_{i+k}.
\]
Thus $P(n,k)$ is simple and cubic. We call $\{u_i,v_i\}$ column $i$.
Alameda et al.\ \cite[Theorem~2.2]{Alameda} proved the general upper
bound $\ZF(P(n,k))\le2k+2$ in 2018. Rashidi, Shajareh Poursalavati,
and Tavakkoli \cite[Theorem~2.2 and Remark~2.4]{RSPT} gave another
proof in 2020 and acknowledged the earlier result. Their asserted equality
$\ZF(P(n,3))=8$ for $n\ge12$ was corrected by Krishnan
\cite{Krishnan}, who proved $\ZF(P(12,3))=7$, identified the gap in
the lower-bound argument, and computed the values through $n=20$.
Krishnan's Proposition~4 gives an explicit eight-vertex forcing set for
all $n\ge9$; his Conjecture~5 asserts equality for every $n\ge13$.

We prove that conjecture and determine the corresponding sharp threshold
for $k=4$.
\begin{theorem}\label{thm:main}
For every $n\ge13$, $\ZF(P(n,3))=8$. For every $n\ge18$,
$\ZF(P(n,4))=10$. The smaller admissible values are
\[
\begin{array}{c|rrrrrr}
 n&7&8&9&10&11&12\\\hline
 \ZF(P(n,3))&6&6&6&8&7&7
\end{array}
\]
and
\[
\begin{array}{c|rrrrrrrrr}
 n&9&10&11&12&13&14&15&16&17\\\hline
 \ZF(P(n,4))&6&6&7&6&8&8&9&8&9.
\end{array}
\]
In particular, $13$ and $18$ are the least eventual stabilization thresholds.
\end{theorem}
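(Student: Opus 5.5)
The proof splits into upper bounds, lower bounds for large $n$, and a finite check for the small cases.

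\emph{Upper bounds.} For $k=3$ we invoke Krishnan's Proposition~4, which gives an explicit eight-vertex forcing set for all $n\ge9$. For $k=4$ the general bound $\ZF(P(n,k))\le 2k+2$ of Alameda et al.\ gives $\ZF(P(n,4))\le10$. For the tabulated small values we exhibit explicit forcing sets of the stated sizes, each verified by running the forcing rule, together with forcing sequences.

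\emph{Lower bound via the first forcing sources.} Suppose $S$ is a zero forcing set and fix a chronological list of forces. Let $T$ be the set consisting of the vertices that perform the first few forces together with their closed neighbourhoods. The key observation is that a vertex $x$ can force only when all but one of its neighbours are already black. So before the $m$-th force, the external boundary $\bd(T)$ of the region $T$ reached so far must consist of vertices that are either in $S$ or were already forced. Making this precise yields $\card{S}\ge\min\card{\bd(T)}$ over suitable sets $T$. In a cubic graph the exact form is: if $T$ is the union of the first $t$ vertices to perform forces, every vertex of $\bd(T)$ that is not yet black must lie in $S$. Counting then gives $\card{S}\ge\card{\bd(T)}$ minus a correction that is controlled when $t$ is chosen at the right moment. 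We use the sizes from the abstract: $t=10$ with target boundary $8$ for $k=3$, and $t=16$ with target boundary $10$ for $k=4$.

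\emph{The isoperimetric inequality.} It remains to show that every set of ten vertices in $P(n,3)$ has $\card{\bd(T)}\ge8$ when $n\ge14$, and every sixteen-vertex set in $P(n,4)$ has boundary at least $10$ when $n\ge20$. A set of $10$ (or $16$) vertices meets at most $10$ (or $16$) columns. For large $n$ there is therefore a long interval of columns missing both $T$ and $N(T)$. Deleting columns inside such an empty interval, and reconnecting the edges across it, does not change $\card{\bd(T)}$ once the interval is longer than $k$: no edge spans it except the $u$- and $v$-edges, which only touch empty columns. This reduces every $n$ beyond a fixed bound to a bounded $n$. The finitely many remaining cases are settled by exhaustive search, encoded as SAT formulas with DRAT certificates for $k=4$ and formalized in Lean for $k=3$.

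\emph{Small cases and remaining thresholds.} The values $n=13$ for $k=3$ and $n=18,19$ for $k=4$ lie below the isoperimetric range, as do all entries of the tables. For each of these we prove the lower bound $\ZF\ge z$ by forts. A fort is a nonempty set $F$ such that no vertex outside $F$ has exactly one neighbour in $F$. Every zero forcing set meets every fort, so a family of forts with no hitting set of size $z-1$ gives $\ZF\ge z$; this is again a finite SAT instance. Matching upper bounds come from the explicit forcing sets. Sharpness of the thresholds is read off the tables: $\ZF(P(12,3))=7$ and $\ZF(P(17,4))=9$.

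\emph{Main obstacle.} The main difficulty is the first step: extracting a clean inequality $\card{S}\ge\card{\bd(T)}$ from the forcing chronology with exactly the right set size. The deletion argument is also delicate, in particular ensuring the empty interval has length at least $k$ so that deletion preserves boundaries. I would first try $T$ equal to the set of the first $2k+4$ forcing vertices, which for $k=3$ and $k=4$ matches the sizes $10$ and $16$ only after adjustment, and tune $t$ until the boundary bound matches $2k+2$.
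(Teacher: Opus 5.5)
Your plan has the same architecture as the paper's proof. It uses upper bounds from Krishnan's set and the $2k+2$ bound, an inequality from the first forcing sources, and a $10$- resp.\ $16$-vertex boundary inequality reduced to finitely many $n$ by deleting a column in an empty stretch. Forts and explicit witnesses handle the remaining $n$. However, the step you call the main obstacle is left unproved. As you phrase it, with a ``correction'' and tuning of $t$, the chain $\card S\ge\card{\bd T}\ge 8$ does not close.

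The missing argument is short and exact. Take $X$ to be the set of sources of the first $p$ forces itself, not their closed neighbourhoods, since the boundary of $N[X]$ can be white. A vertex forces at most once, because afterwards it has no white neighbour, so $\card X=p$. After these $p$ forces the black set $B$ consists of $S$ and the $p$ targets, so $\card B=\card S+p$. Moreover $B\supseteq N[X]$, because each source had all neighbours except its target black, and the target was then coloured. As $X\cap\bd X=\varnothing$, this gives $\card{\bd X}\le\card B-\card X=\card S$ with no correction term. It holds for every $p$ up to the total number $2n-\card S$ of forces. That number is at least $21$ for $\card S\le 7$, $n\ge 14$, and at least $31$ for $\card S\le 9$, $n\ge 20$. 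So $p$ is not tuned against the forcing process at all. It is simply whatever size makes the boundary inequality true, namely $10$ and $16$; your guess $2k+4$ gives $12$ for $k=4$.

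There are two smaller differences.

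Your deletion rule, a run of more than $k$ columns free of $N[X]$, is valid: no edge at a vertex of $X$ can then jump over the deleted column. It is not the paper's deletion lemma, though. The paper deletes the middle column of $2k+1$ consecutive columns free of $X$, so it counts gaps between the at most $c\le 12$ occupied columns and stops at $n=108$. With your rule and $\card{N[X]}\le 25$, the residual range runs up to $n=125$. That range must therefore be covered by your own finite check rather than the paper's $89$ formulas.

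For $k=3$, the paper does not search $10$-subsets directly. The identity $\card{\bd X}+I+K=2c+10$ and the slot-score bound reduce everything to gap words and four exceptional supports, which keeps the check small and formalizable. Your ``exhaustive search'' is correct in principle but left unspecified.
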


For $7\le n\le11$, the small $k=3$ values were reported by Bjorkman
et al.\ \cite[Proposition~4.5 and Table~1, $\ell=0$]{Bjorkman}.
Krishnan \cite[Table~1]{Krishnan} reports these values together with
his correction $\ZF(P(12,3))=7$. The new lower-bound argument is uniform in
$n$: it converts a forcing set into a vertex set with small external
boundary, then reduces the possible column configurations to finitely
many cases. For $k=3$ these are gap words and layer assignments; for
$k=4$ they are propositional satisfiability instances over a bounded
range of circumferences. We give the reductions and the finite predicates
explicitly. Section~\ref{sec:verification} describes their verification
and the scope of the Lean formalization. Krishnan's discussion of $k\ge4$
in \cite[Section~5]{Krishnan} asks for stabilization thresholds and uniform
lower bounds; it does not state a separate $k=4$ conjecture.

\section{Forcing sources, forts, and empty columns}\label{sec:reductions}
For $X\subseteq V(G)$, let
\[
 \bd X=\{w\notin X:\text{$w$ has a neighbor in $X$}\},
 \qquad N[X]=X\cup\bd X.
\]
All boundaries below are external vertex boundaries.

\begin{lemma}\label{lem:first}
If a zero forcing set $S$ in a finite graph performs at least $p$ forces,
then the sources $X$ of the first $p$ forces satisfy
$\card X=p$ and $\card{\bd X}\le\card S$.
\end{lemma}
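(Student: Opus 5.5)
Fix a chronological list of forces $x_1\to y_1,\dots,x_p\to y_p,\dots$ performed from $S$, and let $B_j=S\cup\{y_1,\dots,y_j\}$ be the black set after the first $j$ forces. The plan is to show that the sources are distinct and that every boundary vertex of $X=\{x_1,\dots,x_p\}$ lies in $B_p$ but outside $X$. Distinctness gives $\card X=p$. A counting argument then gives $\card{\bd X}\le\card S$.

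First, distinctness. When $x_j$ forces $y_j$, every neighbor of $x_j$ becomes black, so $x_j$ can never again have exactly one white neighbor. Hence each vertex is a source at most once, and $\card X=p$. Likewise each vertex is forced at most once, since only white vertices are forced. So $y_1,\dots,y_p$ are distinct and lie outside $S$, which gives $\card{B_p}=\card S+p$.

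Second, the inclusion $N[X]\subseteq B_p$. Each $x_j$ is black when it forces, so $x_j\in B_{j-1}\subseteq B_p$. After force $j$, all neighbors of $x_j$ lie in $B_j\subseteq B_p$. Thus $X\cup\bd X\subseteq B_p$. Since $X$ and $\bd X$ are disjoint by definition, $\card X+\card{\bd X}\le\card{B_p}=\card S+p$. Substituting $\card X=p$ gives $\card{\bd X}\le\card S$.

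There is no real obstacle; the only points needing care are the two injectivity facts, namely that sources are distinct and that the $y_j$ are distinct and not in $S$. Both follow directly from the color-change rule. The argument does not use that $S$ is a zero forcing set beyond the existence of $p$ forces, and it does not use that the graph is finite.
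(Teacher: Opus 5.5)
Your proof is correct and follows the paper's argument: sources are distinct because all neighbors of a vertex are black after it forces, $\card{B_p}=\card S+p$, and $X\cup\bd X\subseteq B_p$ with $X\cap\bd X=\varnothing$ gives $\card{\bd X}\le\card S$. You are slightly more explicit than the paper in noting that the targets $y_j$ are distinct and lie outside $S$, which the paper uses tacitly when it writes $\card B=\card S+p$.
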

\begin{proof}
A vertex cannot force twice: after its first force, all its neighbors
are black. After $p$ forces the black set $B$ has cardinality
$\card S+p$, contains $X$, and contains every neighbor of $X$.
Hence $\bd X\subseteq B\setminus X$ and
$\card{\bd X}\le\card B-\card X=\card S$.
\end{proof}

We use the fort terminology credited to Fast and Hicks \cite{FH}
by Brimkov, Fast, and Hicks \cite[Section~2]{BFH}, and the latter
paper's fort-cover characterization \cite[Theorem~8]{BFH}.
A nonempty set $F\subseteq V(G)$ is a \emph{fort} if every vertex outside
$F$ has either zero or at least two neighbors in $F$.
Every zero forcing set meets every fort. Indeed, the first force into a
fort disjoint from the initial black set would have a source outside
$F$ with exactly one neighbor in $F$, a contradiction. Conversely, the
nonempty white remainder of a stalled forcing process is a fort.

\begin{lemma}\label{lem:delete}
Let $k\ge1$ and $n\ge2k+3$. Suppose that columns $0,\ldots,2k$
of $P(n,k)$ contain no vertex of $X$. Delete column $k$, reindex the
surviving columns in their cyclic order, and let $X'$ be the image of
$X$ in $P(n-1,k)$. Then
$\card{X'}=\card X$ and $\card{\bd X'}=\card{\bd X}$.
\end{lemma}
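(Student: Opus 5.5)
We define the reindexing map explicitly and show that it restricts to an isomorphism between the parts of the two graphs that can meet $X\cup\bd X$. Let $\phi$ send column $i$ of $P(n,k)$, for $i\in\{k+1,\ldots,n-1\}\cup\{0,\ldots,k-1\}$, to column $i$ if $i<k$ and to column $i-1$ if $i>k$, preserving the layer ($u$ or $v$). Then $\phi$ is a bijection from $V(P(n,k))$ minus column $k$ onto $V(P(n-1,k))$. We have $X'=\phi(X)$, and since $X$ avoids column $k$, $\card{X'}=\card X$ is immediate.

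The key step is to compare adjacencies. Every edge of $P(n,k)$ joins columns whose indices differ by $0$, $\pm1$ or $\pm k$. Consider an edge both of whose ends are in the \emph{far region} $R$, meaning the columns outside $\{1,\ldots,2k-1\}$. This edge cannot cross the deleted column in a way that $\phi$ distorts. The exceptions would be edges that jump over column $k$, namely spokes $v_iv_{i+k}$ with $0\le i<k<i+k\le 2k$ and the rim edges $u_{k-1}u_k,u_ku_{k+1}$. Conversely, in $P(n-1,k)$ the new edges created by the shift (for example $u_{k-1}u_k'$ where $u'_k=\phi(u_{k+1})$, or $v_iv'_{i+k}$) join images of columns in $\{0,\ldots,2k\}$. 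So we prove the claim that $\phi$ preserves adjacency and non-adjacency between a vertex of a column outside $\{0,\ldots,2k\}$ and any other vertex outside column $k$. This is a short index check, using the hypothesis $n\ge 2k+3$ to ensure no wraparound coincidences: the index differences $\pm1,\pm k$ are unambiguous modulo both $n$ and $n-1$, since $n-1\ge 2k+2$.

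With that claim, the boundaries match. Since $X$ lies in columns $2k+1,\ldots,n-1$, every neighbor of $X$ lies in columns $k+1,\ldots,n+k-1$, that is, outside $\{k\}$ and within distance $k$ of $X$'s columns. So $\bd X$ avoids column $k$. For $x\in X$, the neighbors of $x$ correspond exactly under $\phi$ to the neighbors of $\phi(x)$, because $x$ is in the far region. Hence $\phi(\bd X)=\bd X'$: any neighbor $w'$ of $\phi(x)$ in $P(n-1,k)$ has a preimage $w$ adjacent to $x$, and $w\notin X$ if and only if $w'\notin X'$. Since $\phi$ is injective, $\card{\bd X'}=\card{\bd X}$.

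The main obstacle is the index bookkeeping in the claim. We must check the wrap-around edges near column $0$ and near column $2k$ in both graphs. In particular, a vertex in column $2k+1$ has spoke neighbor $v_{k+1}$, which maps to column $k$ of the new graph. We need $\phi(v_{2k+1})=v'_{2k}$ to be adjacent to $v'_{k}=\phi(v_{k+1})$, which holds. Similarly, column $n-1$ has neighbor $v_{k-1}$, and its image in the new graph sits at $n-2$ and $k-1$, which differ by $k$ modulo $n-1$. Enumerating these cases for the $u$-rim, the $v$-spokes, and the rungs finishes the proof.
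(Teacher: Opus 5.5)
Your proposal is correct and follows the paper's proof: you use the same reindexing map, check that each vertex in columns $2k+1,\ldots,n-1$ has its neighborhood carried bijectively onto that of its image, and conclude $\phi(\partial X)=\partial X'$ from injectivity. The index check you leave as ``enumerating these cases'' is what the paper writes out uniformly by setting $q=i+d$ for $d\in\{-k,-1,1,k\}$. If $q<n$ the image is $q-1=\phi(i)+d$; if $q\ge n$ the residue $q-n\in\{0,\ldots,k-1\}$ is fixed by $\phi$ and equals $(\phi(i)+d)\bmod(n-1)$. The converse direction is handled the same way starting from $j=\phi(i)\in\{2k,\ldots,n-2\}$.
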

\begin{proof}
On surviving column indices define
\[
 f(i)=\begin{cases}i,&0\le i<k,\\i-1,&k<i\le n-1,\end{cases}
\]
and preserve the layer. This is a bijection onto the vertices of
$P(n-1,k)$. An occupied index $i$ lies between $2k+1$ and $n-1$.
For each $d\in\{-k,-1,1,k\}$ put $q=i+d$; then
$k+1\le q\le n+k-1$. If $q<n$, its image is
$q-1=f(i)+d$. If $q\ge n$, its old residue $q-n$ lies in
$\{0,\ldots,k-1\}$ and is unchanged by $f$, while
\[
 (f(i)+d)\bmod(n-1)=q-n.
\]
Thus none of these neighbors is deleted, and each maps to the
corresponding neighbor after deletion. Conversely, an image index
$j=f(i)$ satisfies $2k\le j\le n-2$. A nonwrapping neighbor index
$j+d\ge k$ lifts to $j+d+1=i+d$, while a wrapping index has residue
in $\{0,\ldots,k-1\}$ and lifts to that same residue. Spokes are
preserved as well. It follows that $f(N[X])=N[X']$ and $f(X)=X'$.
Injectivity gives $f(\bd X)=\bd X'$ and both cardinality equalities.
\end{proof}

\section{The upper bounds}\label{sec:upper}
For $k=3$ we recall Krishnan's construction
\cite[Proposition~4]{Krishnan}. If $n\ge9$, the set
$S=\{u_0,\ldots,u_7\}$ permits the forces
\[
 u_i\longrightarrow v_i\ (1\le i\le6),\qquad
 v_4\longrightarrow v_7,\qquad u_7\longrightarrow u_8.
\]
Every source has its other two neighbors black. If $\rho$ is rotation
by one column, then $\rho(S)\subseteq\cl(S)$. Closure is monotone,
idempotent, and equivariant under automorphisms, so
\[
 \rho(\cl(S))=\cl(\rho(S))\subseteq\cl(S).
\]
The two sides have equal cardinality; hence $\cl(S)$ is rotation
invariant. It contains every outer vertex and therefore every inner
vertex by the spoke rule. This proves $\ZF(P(n,3))\le8$.

For $k=4$, an explicit instance of the construction in
\cite[Theorem~2.2]{RSPT} starts from $S=\{u_0,\ldots,u_9\}$ when $n\ge11$. First perform
\[
 u_i\longrightarrow v_i\ (1\le i\le8),\qquad
 v_4\longrightarrow v_0,\qquad v_5\longrightarrow v_9.
\]
Both vertices of columns zero through nine are now black. For
$j=10,\ldots,n-1$, perform
\begin{equation}\label{eq:propagation}
 u_{j-1}\longrightarrow u_j,\qquad
 v_{j-4}\longrightarrow v_j.
\end{equation}
Before stage $j$, all vertices in columns below $j$ are black and all
vertices in columns at least $j$ are white. The other neighbors of the
first source are $u_{j-2},v_{j-1}$; those of the second are
$u_{j-4},v_{j-8}$. All lie in earlier columns. The first force leaves
$v_j$ white, so both forces are legal and preserve the invariant.
For $n=9,10$, all outer vertices form a forcing set. Thus
$\ZF(P(n,4))\le10$ for every admissible $n$.

\section{Ten-vertex boundaries in \texorpdfstring{$P(n,3)$}{P(n,3)}}\label{sec:k3}
\begin{proposition}\label{prop:k3boundary}
If $n\ge14$ and $X\subseteq V(P(n,3))$ has ten vertices, then
$\card{\bd X}\ge8$.
\end{proposition}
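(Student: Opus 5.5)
The plan is to reduce to finitely many circumferences with Lemma~\ref{lem:delete} and then settle those by a structured finite check. Suppose, for contradiction, that $X\subseteq V(P(n,3))$ has $\card X=10$ and $\card{\bd X}\le7$ with $n\ge14$ as small as possible. Call a column occupied if it meets $X$. At most ten columns are occupied, so the unoccupied columns split into cyclic gaps (maximal runs of empty columns). If some gap has length at least $2k+1=7$, rotate so that it covers columns $0,\ldots,6$. Lemma~\ref{lem:delete} with $k=3$ then yields $X'\subseteq V(P(n-1,3))$ with the same size and boundary. This is valid because $n\ge14\ge2k+3=9$. Iterating, we may assume either that every gap has length at most $6$, or that we have reached the smallest circumference at which deletion is still allowed inside the target range. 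So the proof splits in two: (a) an a priori bound on $n$ when all gaps are short, and (b) a finite verification for $n$ in a bounded window starting at $14$.

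For (a), I would argue that a gap of length $g$ between occupied columns forces boundary vertices near both of its ends. The outer vertices $u_{i\pm1}$ adjacent to an occupied column lie in $\bd X$ unless they are in $X$. Likewise the inner vertices at distance $3$ contribute boundary unless they are in $X$. I would attach to each gap a small set of boundary vertices, at least one, and typically two once $g\ge1$, located in the columns bordering it, and check that these sets can be charged injectively. With ten vertices in at most ten columns, $n$ large and all gaps of length at most $6$, the number of gaps grows linearly with $n$. Together with the charging this gives $\card{\bd X}\ge8$ once $n$ exceeds some explicit bound $N_0$, roughly $n\ge 10+6\cdot(\text{number of clusters})$. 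So only $14\le n\le N_0$ remain, where $N_0$ should be around $2k+1$ plus the total span, at most in the sixties. The deletion step also has to preserve $n\ge14$, which needs care. If deleting would drop $n$ to $13$, we stop, so the verification window must include every $n$ that can arise this way.

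For (b), I would encode the configuration as a \emph{gap word}: the cyclic sequence of gap lengths, each between $0$ and $6$, together with a \emph{layer assignment} recording, for each occupied column, whether it contains $u_i$, $v_i$, or both. The boundary is a local function of this data, because each vertex's neighbors lie within distance $3$ columns. So $\card{\bd X}$ can be computed by a sliding window of width $7$. Enumerating compositions of the occupied columns with bounded gaps and all layer assignments summing to ten vertices gives a finite search. We then prune any partial word whose accumulated boundary already exceeds $7$. This is the verification the paper describes as gap words and layer assignments, and it can be formalized in Lean.

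I expect the main obstacle to be step (a): proving that configurations spread over many short gaps cannot keep the boundary at $7$. Inner vertices interact at distance $3$, so the neighborhoods of different clusters can overlap and share boundary vertices, which makes naive charging fail. If a clean charging argument does not work, I would instead enlarge the finite search to handle all gap words directly. This is possible because a word of length $n$ with all gaps at most $6$ and at most ten occupied columns has $n\le 10\cdot7=70$, so the window $14\le n\le70$ is finite outright. That turns (a) into a pure bound and leaves the entire burden on the computational check.
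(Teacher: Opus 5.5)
Your proposal is correct, and its reduction is the paper's. You apply Lemma~\ref{lem:delete} across seven consecutive empty columns until either every empty run has length at most $6$ or $n=14$. In the first case $n\le 7c\le 70$ follows at once from $c\le 10$. So the charging argument in (a) is unnecessary. Its premise that the number of gaps grows with $n$ is also off, since there are only $c\le 10$ gaps.

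The two proofs differ in the finite check. You enumerate gap words together with layer assignments and compute $\card{\bd X}$ directly. This covers all $c\le 10$ and $14\le n\le 70$, plus all ten-subsets of $P(14,3)$ for the long-gap case.

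The paper first disposes of $c\ge 9$ using the $2c-10$ spoke boundary vertices, which caps the range at $n\le 56$. It then proves the exact identity $\card{\bd X}+I+K=2c+10$ and the slot-score bound $2(I+K)\le T(h)$, where $T(h)$ depends only on the gap word. This makes the predicate independent of layers. About 6.7 million gap words are checked against $T(h)\le 4c+5$. Layer assignments are needed only for four exceptional supports in $P(14,3)$, which is 2,972 assignments.

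Your route needs no extra lemma, and its finite predicate is simply the proposition restricted to a window, so it is conceptually easier to trust. The cost is size: for $c=8$ alone there are on the order of $1792\cdot 7^8\approx 10^{10}$ configurations before symmetry reduction and pruning. That is feasible in C but much heavier than the paper's check and far less convenient to formalize in Lean. It also does not isolate the tight configurations, which the paper's method shows all live at $n=14$.
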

The proof occupies the next two subsections.

\subsection{An exact boundary count}
Let $C$ be the set of occupied columns, put $c=\card C$, and set
$O=\{u_i,v_i:i\in C\}$. The same-layer shifts
$u_i\mapsto u_{i\pm1}$ and $v_i\mapsto v_{i\pm3}$ carry $X$ to
sets $R_\pm$ of size ten. Define
\[
 I=\card{R_+\cap O}+\card{R_-\cap O},\qquad
 K=\card{(R_+\setminus O)\cap(R_-\setminus O)}.
\]
The set $O\setminus X$ lies in the boundary by the spokes, and outside
$O$ the boundary is the union of $R_+\setminus O$ and $R_-\setminus O$.
Inclusion--exclusion therefore gives
\begin{equation}\label{eq:identity}
 \card{\bd X}+I+K=2c+10.
\end{equation}
Necessarily $5\le c\le10$. If $c\ge9$, the $2c-10$ spoke boundary
vertices already give the desired bound. It remains to consider
$5\le c\le8$.

For a vertex slot in column $a\in C$, with layer step $d=1$ or $3$,
assign each direction a score: two if $a\pm d\in C$; otherwise one
if $a\pm2d\in C$; otherwise zero. A selected vertex's total slot
score is the sum over its two directions. Each directional image
landing in $O$ contributes two to this score. A collision outside $O$
comes from two selected vertices at distance $2d$ in the same layer
and contributes one at each endpoint. Hence the sum of the ten
selected slot scores is at least $2(I+K)$.

Write $h=(h_0,\ldots,h_{c-1})$ for the positive cyclic gaps between
successive occupied columns; thus $\sum h_i=n$. With
$\phi(1)=2$, $\phi(2)=1$, and $\phi(t)=0$ for $t\ge3$, the outer
slot in the $i$th occupied column has score
$A_i=\phi(h_{i-1})+\phi(h_i)$. Each inner direction scores two if
a nonempty gap prefix in that direction sums to three, otherwise one
if such a prefix sums to six, otherwise zero. Let $B_i$ be the inner
slot score and $T(h)$ the sum of the ten largest entries among the
$2c$ scores $A_i,B_i$. We have $2(I+K)\le T(h)$. Consequently,
\begin{equation}\label{eq:slot}
 T(h)\le4c+5\quad\Longrightarrow\quad I+K\le2c+2
 \quad\Longrightarrow\quad\card{\bd X}\ge8,
\end{equation}
where the first implication uses integrality.

\subsection{Finite gap checks and deletion}
For $5\le c\le8$, enumerate all words in $\{1,\ldots,7\}^c$ whose
sum is at least fourteen. Table~\ref{tab:gaps} gives their counts and
the exceptions to the score bound in \eqref{eq:slot}.
\begin{table}[t]
\centering
\caption{Finite gap checks for ten selected vertices. The last column
counts positive compositions of $14$ with $h_0\ge8$.}\label{tab:gaps}
\begin{tabular}{rrrr}
\toprule
$c$ & Bounded gap words & $T(h)>4c+5$ & Long-gap base words\\
\midrule
5&15,550&0&15\\
6&115,939&6&6\\
7&821,827&14&1\\
8&5,763,514&8&0\\
\bottomrule
\end{tabular}
\end{table}
Every exceptional word is a cyclic rotation of one of
\begin{equation}\label{eq:roots}
\begin{gathered}
 (6,2,1,2,1,2),\\
 (3,2,1,2,1,2,3),\quad(3,3,2,1,2,1,2),\quad(6,2,1,1,1,1,2),\\
 (3,2,1,1,1,1,2,3),\quad(3,3,2,1,1,1,1,2).
\end{gathered}
\end{equation}
All have sum fourteen; the list includes cyclically equivalent roots.
Their cumulative sums, up to column rotation and reflection, give the
four supports in Table~\ref{tab:supports}. Label each occupied column
outer only, inner only, or both, retaining the assignments with exactly
ten selected vertices. Direct boundary calculation gives the table.
\begin{table}[t]
\centering
\caption{Exceptional supports in $P(14,3)$.}\label{tab:supports}
\begin{tabular}{lrr}
\toprule
Occupied columns & Ten-vertex assignments & Minimum $\card{\bd X}$\\
\midrule
$\{0,6,8,9,11,12\}$&60&8\\
$\{0,3,6,8,9,11,12\}$&560&8\\
$\{0,6,8,9,10,11,12\}$&560&8\\
$\{0,3,6,8,9,10,11,12\}$&1,792&8\\
\bottomrule
\end{tabular}
\end{table}
Explicitly, for the outer and inner index sets $U,V$, the calculation is
\begin{equation}\label{eq:direct}
\begin{split}
 \card{\bd X}={}&\card{((U-1)\cup(U+1)\cup V)\setminus U}\\
 &+\card{((V-3)\cup(V+3)\cup U)\setminus V},
\end{split}
\end{equation}
with shifts modulo $n$. The finite predicates are thus completely
specified by the following procedure:
\begin{quote}\small
For each $c=5,6,7,8$, enumerate $h\in\{1,\ldots,7\}^c$ with
$\sum h_i\ge14$; check $T(h)\le4c+5$ or that a rotation of $h$
occurs in \eqref{eq:roots}. Enumerate also the positive compositions
of $14$ into $c$ parts with $h_0\ge8$, and check $T(h)\le4c+5$.
For each of the four exceptional supports, enumerate all three-way
layer assignments of total size ten and check \eqref{eq:direct} is
at least eight.
\end{quote}
These checks pass. They comprise 6,716,830 bounded gap words, 22
long-gap base words, and 2,972 exceptional layer assignments.
Their executable reproduction and formal counterparts are described in
Section~\ref{sec:verification}.

To complete the proof of Proposition~\ref{prop:k3boundary}, suppose
$n\ge15$ and some gap is at least eight. There are seven consecutive
empty columns. Lemma~\ref{lem:delete}, with $k=3$, deletes their middle
column without changing $\card X$ or $\card{\bd X}$. Repeat until all
gaps are at most seven or $n=14$. The bounded-gap check applies in
the first case. In the second, a remaining long gap can be rotated
to $h_0\ge8$ and is covered by the long-gap base check. The exceptional
bounded-gap supports are covered by Table~\ref{tab:supports}.
This proves the proposition.

\subsection{The case \texorpdfstring{$n=13$}{n=13} and the equality}
Rotate the source of a possible initial force to column zero. The six
possible directed forces are
\[
 u_0\to v_0,\quad u_0\to u_1,\quad u_0\to u_{12},\quad
 v_0\to u_0,\quad v_0\to v_3,\quad v_0\to v_{10}.
\]
For a seven-vertex seed set, each case requires the source and its two
neighbors other than the target, excludes the target, and chooses four
more seeds from the other 22 vertices. There are
$\binom{22}{4}=7,315$ candidates in each case. The six finite certificate
families provide a disjoint nonempty fort for each of the 43,890
case--candidate pairs. Validity is the vertex-by-vertex condition
\[
 S\cap F=\varnothing,\qquad F\ne\varnothing,\qquad
 \card{N(w)\cap F}\ne1\quad(w\notin F).
\]
The formal check proves both this condition and complete candidate
coverage. Independently, exhaustive closure computation on all
$\binom{26}{7}=657,800$ seven-vertex sets finds no zero forcing set;
the white remainder of each closure gives such a fort. Thus no
seven-vertex set forces $P(13,3)$. No smaller set forces either,
since enlarging a zero forcing set preserves zero forcing.

For $n\ge14$, a forcing set of at most seven vertices would perform
at least $2n-7\ge21$ forces. Lemma~\ref{lem:first} with $p=10$
contradicts Proposition~\ref{prop:k3boundary}. Combining this lower
bound, the $n=13$ case, and Krishnan's upper bound proves
$\ZF(P(n,3))=8$ for every $n\ge13$.

\section{Sixteen-vertex boundaries in \texorpdfstring{$P(n,4)$}{P(n,4)}}\label{sec:k4}
\begin{proposition}\label{prop:k4boundary}
If $n\ge20$ and $X\subseteq V(P(n,4))$ has sixteen vertices, then
$\card{\bd X}\ge10$.
\end{proposition}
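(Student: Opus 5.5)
We follow the template of Proposition~\ref{prop:k3boundary}: first establish the claim for a bounded range of circumferences by exhaustive (SAT-based) search, then shrink long empty stretches with Lemma~\ref{lem:delete}. The abstract indicates that for $k=4$ the finite cases are encoded as propositional formulas rather than gap words, so we set things up that way.

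\textbf{Reduction to bounded gaps.} Let $C$ be the set of occupied columns and $c=\card C$. Since a column holds at most two vertices, $8\le c\le16$. If $c\ge13$, the spokes already give $\card{\bd X}\ge\card{O\setminus X}=2c-16\ge10$, so we may assume $8\le c\le12$. Suppose some cyclic gap between consecutive occupied columns is at least $2k+2=10$. Then there are nine consecutive empty columns, and Lemma~\ref{lem:delete} with $k=4$ (valid while $n\ge11$) deletes the middle one. This changes neither $\card X$ nor $\card{\bd X}$. We repeat until every gap is at most $9$ or $n$ reaches a base value. Since $c\le12$ and the gaps sum to $n$, bounded gaps force $n\le 9c\le108$. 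The surviving instances are therefore:
\begin{itemize}
\item[(i)] sixteen-vertex sets in $P(n,4)$ for $20\le n\le N_0$ with all gaps at most nine;
\item[(ii)] base configurations in $P(20,4)$ in which some gap is at least ten.
\end{itemize}

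\textbf{Finite checks.} For each $n$ in the relevant range we encode the existence of $X$ with $\card X=16$ and $\card{\bd X}\le9$ as a CNF formula:
\begin{itemize}
\item one variable per vertex for membership in $X$;
\item one variable per vertex for membership in $\bd X$, defined as ``not in $X$ and some neighbor in $X$'';
\item cardinality constraints $\card X=16$ and $\card{\bd X}\le9$;
\item symmetry breaking that rotates an occupied column to position $0$;
\item for case (i), clauses forbidding nine consecutive empty columns;
\item for case (ii), clauses forcing columns $11,\dots,19$ (a long gap) to be empty.
\end{itemize}
Each formula should be UNSAT, certified by a DRAT proof. These are plausibly the 89 boundary formulas mentioned in the abstract.

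To keep the range of $n$ small, we would first try the counting identity \eqref{eq:identity} adapted to $k=4$: $\card{\bd X}+I+K=2c+16$. We would pair it with a slot-score bound to discard all gap words except a short list of exceptional ones, and pass only those to SAT.

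\textbf{Main obstacle.} The hard step is bounding $n$ uniformly. Naively the bounded-gap case runs up to $n\approx108$, and the sixteen-vertex SAT instances there may be expensive. Two fixes seem viable: a sharper deletion argument, or the score inequality, to show that any counterexample has total span at most about $30$ columns. Failing both, we would rely on solver performance with strong symmetry breaking (rotations and reflection) and per-$n$ formulas.

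A secondary concern is that the threshold $20$ must be exactly where the base case first holds. If $P(19,4)$ has a sixteen-vertex set with boundary $9$, the deletion has to stop at $n=20$, and the long-gap base instances there must be checked separately, as above.
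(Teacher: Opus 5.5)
Your proposal is correct and is essentially the paper's proof. The spoke count forces $c\le12$, Lemma~\ref{lem:delete} applied to a run of nine empty columns reduces any counterexample to $20\le n\le108$, and DRAT-certified unsatisfiable CNFs (the 89 boundary formulas, one per $n$) finish the argument. The paper is coarser in two respects: it stops deleting at $n=108$, and for each $n$ it encodes all sixteen-vertex sets with no gap-restriction clauses and no separate long-gap base case at $n=20$. It also simply solves the instances up to $n=108$, so the slot-score prefiltering and span-sharpening you list as possible fixes are unnecessary.
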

\begin{proof}
Suppose $\card{\bd X}\le9$ and let $c$ be the number of occupied
columns. The spokes supply $2c-16$ boundary vertices, so
\[
 2c-16\le9,\qquad c\le12.
\]
If $n>108$, some cyclic gap contains at least nine empty columns:
otherwise $n\le c+8c\le108$. Apply Lemma~\ref{lem:delete} with
$k=4$ to reduce $n$ by one and preserve the putative counterexample.
Repeating this operation stops at $n=108$ and cannot introduce a smaller
exceptional circumference. Thus it suffices to rule out counterexamples
for $20\le n\le108$. The 89 finite formulas described below are
unsatisfiable, as certified by their checked DRAT proofs. Hence there
is no counterexample.
\end{proof}

If $n\ge20$, a forcing set of at most nine vertices performs at least
$2n-9\ge31$ forces. Applying Lemma~\ref{lem:first} with $p=16$
contradicts Proposition~\ref{prop:k4boundary}. This proves the required
lower bound for $n\ge20$.

The boundary inequality itself fails at $n=19$. For example,
\[
 X=\{u_0,\ldots,u_9,v_3,v_4,v_5,v_6,v_{10},v_{18}\}
\]
has sixteen vertices and boundary
\[
 \bd X=\{u_{10},u_{18},v_0,v_1,v_2,v_7,v_8,v_9,v_{14}\},
\]
of size nine. We therefore use forts for $n=18,19$.

\subsection{The 89 boundary formulas}
For each $20\le n\le108$, let $x_w$ record membership in $X$.
For $20\le n\le31$, use variables $z_w$ with
\[
 z_w\ \longleftrightarrow\ x_w\lor\bigvee_{t\sim w}x_t,
 \qquad \sum_w x_w=16,\quad\sum_w z_w\le25.
\]
Here $z_w$ records $N[X]$. For $32\le n\le108$, use instead
\[
 z_w\ \longleftrightarrow\ \neg x_w\land\bigvee_{t\sim w}x_t,
 \qquad \sum_w x_w=16,\quad\sum_w z_w\le9,
\]
so that $z_w$ records $\bd X$. Since $N[X]=X\sqcup\bd X$, the two
encodings express the same mathematical condition. The clause
$x_{u_0}\lor x_{v_0}$ normalizes a rotation of a nonempty set.
The neighborhood equivalences expand directly into conjunctive normal
form (CNF), and cardinalities use sequential counters \cite{Sinz}.
Every boundary counterexample yields a satisfying assignment after
rotation and extension to the counter variables; conversely the vertex
variables of any satisfying assignment give a counterexample.

For completeness, the optimized counter grid used by the package has
cells $S(r,t)$, meaning ``at least $r$ of $\ell_0,\ldots,\ell_t$
hold'', on
\[
 1\le r\le b,\qquad r-1\le t\le m-b+r-2
\]
for an at-most-$b$ constraint on $m$ literals. The clauses start the
first row from a true literal, propagate counts horizontally and
diagonally, and prohibit overflow from row $b$. If at most $b$ literals
hold, assigning the prefix-count meanings satisfies every clause. If
$b+1$ hold at positions $p_1<\cdots<p_{b+1}$, then
$p_r\le m-b+r-2$ for $r\le b$. The clauses successively force
$S(r,p_r)$ and propagate row $b$ to $p_{b+1}-1$, where the last true
literal contradicts the overflow clause. Exactly sixteen is encoded
by at-most-sixteen on the positive literals and at-most-$(2n-16)$ on
their negatives, with distinct auxiliary variables.

\subsection{The 11 fort formulas}
The general fort-cover formulation and its strengthening by a necessary
initial-force neighborhood condition are due to Brimkov, Fast, and Hicks
\cite[Theorem~8 and Section~4.3]{BFH}. Here we specialize this method
to $P(n,4)$, using explicit stored forts, four symmetry representatives,
and independently checked unsatisfiability certificates.
For $9\le n\le19$, let $b$ be one less than the asserted zero forcing
number. A Boolean variable $s_w$ records a seed; impose
$\sum_w s_w\le b$. Each stored nonempty fort $F$ adds the clause
$\bigvee_{w\in F}s_w$. Simultaneous rotations $i\mapsto i+a$ and
reflections $i\mapsto a-i$ in both layers preserve adjacency. Their
four directed-edge orbits have representatives
\[
 u_0\to v_0,\qquad u_0\to u_1,\qquad v_0\to u_0,\qquad v_0\to v_4.
\]
For each representative a selector implies that the source and its
other two neighbors are seeds and the target is not; at least one
selector must hold. Every putative forcing set of size at most $b$
is a proper subset of the vertices and must admit an initial force.
Normalizing that edge gives a satisfying assignment to one of the four
cases and to every fort clause. All 11 formulas are unsatisfiable.
Thus they exclude the proposed smaller forcing sets, including all
sets of size at most nine for $n=18,19$.

\section{Small values and sharpness}\label{sec:small}
The $k=3$ values for $7\le n\le11$ in Theorem~\ref{thm:main} occur
in Bjorkman et al.\ \cite[Proposition~4.5 and Table~1]{Bjorkman};
the value at $n=12$ is Krishnan's correction \cite[Table~1]{Krishnan}.
They can be reproduced by enumerating
all sets of one less than the stated size and computing their closures;
none forces. Witnesses of the stated size are
\[
\begin{array}{c|l}
n&\text{zero forcing set in }P(n,3)\\\hline
7,8&u_0,u_1,u_2,u_3,u_4,u_5\\
9&u_1,u_2,u_3,u_4,u_5,v_0\\
10&u_0,\ldots,u_7\\
11,12&u_0,\ldots,u_6.
\end{array}
\]
The numbers of candidates at one less than these sizes, in order of
$n=7,\ldots,12$, are
$2,002$, $4,368$, $8,568$, $77,520$, $74,613$, and $134,596$.
These exhaustive checks and the witnesses have been rerun for the
present manuscript. Monotonicity excludes smaller seed sets as well.

For $k=4$ the fort formulas provide the lower bounds for
$9\le n\le17$. The following witnesses attain them; Appendix~\ref{sequences}
gives complete forcing sequences.
\begin{equation}\label{eq:small4}
\begin{array}{c|l}
n&\text{zero forcing set in }P(n,4)\\\hline
9&u_0,u_1,u_2,u_3,u_8,v_4\\
10&u_0,u_1,u_2,u_3,u_9,v_4\\
11&u_0,\ldots,u_5,u_{10}\\
12&u_0,u_1,u_2,u_{11},v_4,v_5\\
13&u_0,u_1,u_2,u_3,u_5,u_6,u_7,u_{12}\\
14&u_0,\ldots,u_5,u_8,u_{13}\\
15&u_0,\ldots,u_7,u_{14}\\
16&u_0,\ldots,u_6,u_{15}\\
17&u_0,\ldots,u_7,u_{16}.
\end{array}
\end{equation}
Combining the upper bounds, both uniform lower bounds, and the finite
cases proves Theorem~\ref{thm:main}. Since $\ZF(P(12,3))=7$ and
$\ZF(P(17,4))=9$, neither stabilization threshold can be lowered.

\section{Verification and availability}\label{sec:verification}
\subsection{Finite computation}
The $k=3$ finite checks enumerate precisely the objects in
Tables~\ref{tab:gaps} and \ref{tab:supports}, verify the exceptional
root-to-support reductions, and compute the small closures. A standalone
C99 checker is included as \texttt{anc/check\_k3.c} in the arXiv source
archive. It is byte-identical to \texttt{arxiv/checks/check\_k3.c} at
repository revision \texttt{1185c86}; \texttt{anc/checker-provenance.json}
records the full revision and SHA-256 digest. From the extracted archive,
run
\begin{quote}\small
\verb|cc -O2 -std=c99 -Wall -Wextra \|\\
\verb|  anc/check_k3.c -o check_k3|\\
\verb|./check_k3|
\end{quote}
The output, also archived as \texttt{anc/k3-check.log}, reports
6,716,830 bounded gap words, 22 long-gap base words, 2,972 exceptional
layer assignments, and the small-value checks for $7\le n\le13$,
ending with \texttt{PASS ALL k3 finite predicates and small values}.
These calculations use no SAT solver and were rerun for this manuscript.

For $k=4$, the proof package \cite{Artifacts} supplies all 100 CNFs,
33,396 nonempty forts, 22 complete forcing witnesses for $9\le n\le30$,
and the 100 binary DRAT certificates. Its deterministic generator
reconstructs every CNF. A separate semantic checker reconstructs the
graph from direct neighbor formulas and checks all neighborhood clauses,
sequential-counter grids, normalization conditions, first-force
selectors, and fort clauses. It checks every fort vertex by vertex
and every forcing step against the current black set. It imports
neither the generator nor the SAT encoding library.

The original certificates were generated using CaDiCaL~1.9.5 through
\texttt{python-sat}~1.9.dev15 and checked separately with the bundled,
version-pinned DRAT-trim \cite{DRAT}. The full package records a complete
successful check of all 100 certificates. Preparing this manuscript
included fresh CNF regeneration and byte comparison, all semantic and
forcing checks, 86,112 deletion-offset checks, and 21,844 exhaustive
layer assignments for the deletion map. The infinite deletion statement
is proved in Lemma~\ref{lem:delete}. The initial manuscript reused the archived DRAT run. For this revision,
we extracted the released archive afresh and ran \texttt{verify\_all};
all 100 certificates passed, each with the exact checker output
\texttt{s VERIFIED}. This replay required no new SAT search.

\subsection{Lean formalization}
The $k=3$ development in \texttt{trureturing} \cite{LeanSource} formalizes
$P(n,3)$ as a simple graph on $\mathrm{Bool}\times\mathrm{Fin}(n)$,
with the Boolean coordinate specifying the layer. Its inductive
predicate \texttt{Black} is the least set containing the seeds and
closed under the color-change rule. The formal zero forcing number is
the least cardinality of a finite set whose closure is the whole graph.
These definitions agree with those of Section~1.

The development proves the first-forcing-sources lemma, the boundary
identity, slot domination, finite gap and layer checks, buffered
deletion, fort validity and complete coverage at $n=13$, and the
upper-bound process. Finite certificates are inputs to proved checkers.
The theorem \texttt{ZeroForcingThree.result} states
\[
 \forall n\in\mathbb N,\quad 13\le n\ \Longrightarrow\
 \texttt{zeroForcingNumber}\,(\texttt{gp}\ n\ 3)=8.
\]
The source revision in \cite{LeanSource} pins Lean~4.33.0 and Mathlib
revision \texttt{db584cd6} (full revision in its
\href{https://github.com/the-omega-institute/trureturing/blob/7ee4c64c220445cb8c8a3c304466dc47d39522a3/lake-manifest.json}{\texttt{lake-manifest.json}}).
The archived CI record is
\href{https://github.com/the-omega-institute/trureturing/actions/runs/36224198057/job/108355461972}{run 36224198057, job 108355461972 (\texttt{push / current})}.
It records \texttt{make current SEED\_EXPORT=deferred}, including the
Lean-report stage, completing with exit zero on 26 September 2026.
The job checked merge revision \texttt{5c702271}, whose proof head was
\texttt{d0e27e95}. A recursive import audit gives identical Git blob
IDs for all 42 project modules in the final theorem's import closure,
and for \texttt{lean-toolchain}, \texttt{lake-manifest.json}, and
\texttt{lakefile.toml}, at those two revisions and the cited revision
\texttt{7ee4c64c}. The full revisions, per-file identities, external
imports, and relevant log excerpt accompany the source archive.

No new Lean build or transitive axiom audit was performed for this
manuscript. The available archived log contains no output of
\texttt{\#print axioms} for the final theorem; accordingly we do not
claim an audited axiom list. The exact query to obtain that record is
provided in \texttt{anc/lean-provenance.md}. The supplied formalization
covers the $k=3$ stabilization statement for $n\ge13$. The $k=3$
small-value table for $n<13$ is verified by the separate finite
computations above. The $k=4$ theorem is established by the mathematical
reduction and DRAT-certified finite checks; its Lean formalization
remains to be completed.

\subsection{Data and code availability}
The finite data, generators, checking code, and verification documentation
are available from
\begin{center}\small
\url{https://github.com/the-omega-institute/petersen-zero-forcing}.
\end{center}
The full $k=4$ certificate archive is the repository's
\href{https://github.com/the-omega-institute/petersen-zero-forcing/releases/tag/v2-full}{\texttt{v2-full} release},
asset \texttt{krishnan-k4-v2-full.zip}. Extract that archive and run
\begin{quote}
\verb|cd krishnan-k4-v2-full/package|\\
\verb|./verify_all --log-dir ../verification-logs|
\end{quote}
with Python~3.11 or later and a C compiler. This offline command checks
the manifest, regenerates the formulas, audits their semantics and
forcing witnesses, builds DRAT-trim, and requires \texttt{s VERIFIED}
from each of the 100 proof checks. The Git checkout omits the large
DRAT files; the complete command is intended for the released package.
The standalone $k=3$ checker and its expected output are included in
the ancillary directory of this paper's source archive, with the
revision and digest specified in Section~7.1. The Lean source and its
dependencies are linked directly in \cite{LeanSource}; its source
correspondence and archived CI evidence are included in the same
ancillary directory. The source archive supplies the revision-identified checker without
requiring a separate repository checkout.

\begin{remark}[Structural lower bound]
A structural proof of the $k=4$ lower bound that replaces or substantially
reduces the 89 boundary cases is a natural next step and remains future
work. The proof here uses the finite verification described above.
\end{remark}

\subsection*{AI-use statement}
AI assistance was used for mathematical exploration, computational code,
Lean proof development, and drafting and revising the manuscript. The
mathematical claims are supported by the arguments and verification
artifacts specified above. The authors are responsible for the
mathematical claims and the presentation.

\subsection*{Author contributions}
A.K. corrected the value $\ZF(P(12,3))$, proved the upper bound
$\ZF(P(n,3))\le8$ for $n\ge9$, and posed Conjecture~5 in \cite{Krishnan}.
H.M. and W.Z. developed the lower-bound arguments, the finite verification
and certificates, and the Lean formalization of the $k=3$ stabilization
theorem for the present paper.

\appendix
\section{Explicit small-case forcing sequences}\label{sequences}
For each row below, start from the seed set in \eqref{eq:small4} and perform the
arrows in the printed order, reading left to right and then top to bottom.
Every source is black and has exactly the indicated single white neighbor.
The package stores the same sequences with vertex codes $u_i=i$, $v_i=n+i$.

\noindent\textbf{$n=9$.}\par\nopagebreak\[
\begin{gathered}
u_{0}\to v_{0}\quad u_{1}\to v_{1}\quad u_{2}\to v_{2}\quad v_{0}\to v_{5}\quad v_{1}\to v_{6}\\
v_{2}\to v_{7}\quad v_{5}\to u_{5}\quad v_{6}\to u_{6}\quad u_{5}\to u_{4}\quad u_{6}\to u_{7}\\
u_{8}\to v_{8}\quad v_{7}\to v_{3}
\end{gathered}\]
\noindent\textbf{$n=10$.}\par\nopagebreak\[
\begin{gathered}
u_{0}\to v_{0}\quad u_{1}\to v_{1}\quad u_{2}\to v_{2}\quad v_{0}\to v_{6}\quad v_{2}\to v_{8}\\
v_{4}\to u_{4}\quad v_{6}\to u_{6}\quad v_{8}\to u_{8}\quad u_{3}\to v_{3}\quad u_{4}\to u_{5}\\
u_{5}\to v_{5}\quad u_{6}\to u_{7}\quad u_{7}\to v_{7}\quad u_{9}\to v_{9}
\end{gathered}\]
\noindent\textbf{$n=11$.}\par\nopagebreak\[
\begin{gathered}
u_{0}\to v_{0}\quad u_{1}\to v_{1}\quad u_{2}\to v_{2}\quad u_{3}\to v_{3}\quad u_{4}\to v_{4}\\
v_{0}\to v_{7}\quad v_{3}\to v_{10}\quad v_{4}\to v_{8}\quad v_{7}\to u_{7}\quad v_{8}\to u_{8}\\
v_{10}\to v_{6}\quad u_{7}\to u_{6}\quad u_{8}\to u_{9}\quad u_{9}\to v_{9}\quad v_{1}\to v_{5}
\end{gathered}\]
\noindent\textbf{$n=12$.}\par\nopagebreak\[
\begin{gathered}
u_{0}\to v_{0}\quad u_{1}\to v_{1}\quad v_{0}\to v_{8}\quad v_{1}\to v_{9}\quad v_{4}\to u_{4}\\
v_{5}\to u_{5}\quad v_{8}\to u_{8}\quad v_{9}\to u_{9}\quad u_{4}\to u_{3}\quad u_{5}\to u_{6}\\
u_{8}\to u_{7}\quad u_{9}\to u_{10}\quad u_{10}\to v_{10}\quad u_{11}\to v_{11}\quad u_{2}\to v_{2}\\
u_{3}\to v_{3}\quad u_{6}\to v_{6}\quad u_{7}\to v_{7}
\end{gathered}\]
\noindent\textbf{$n=13$.}\par\nopagebreak\[
\begin{gathered}
u_{0}\to v_{0}\quad u_{1}\to v_{1}\quad u_{2}\to v_{2}\quad u_{6}\to v_{6}\quad v_{2}\to v_{11}\\
v_{6}\to v_{10}\quad v_{10}\to u_{10}\quad v_{1}\to v_{5}\quad v_{5}\to v_{9}\quad v_{9}\to u_{9}\\
u_{5}\to u_{4}\quad u_{9}\to u_{8}\quad u_{10}\to u_{11}\quad u_{12}\to v_{12}\quad v_{0}\to v_{4}\\
v_{4}\to v_{8}\quad v_{11}\to v_{7}\quad v_{12}\to v_{3}
\end{gathered}\]
\noindent\textbf{$n=14$.}\par\nopagebreak\[
\begin{gathered}
u_{0}\to v_{0}\quad u_{1}\to v_{1}\quad u_{2}\to v_{2}\quad u_{3}\to v_{3}\quad u_{4}\to v_{4}\\
v_{0}\to v_{10}\quad v_{4}\to v_{8}\quad v_{8}\to v_{12}\quad v_{12}\to u_{12}\quad u_{12}\to u_{11}\\
u_{13}\to v_{13}\quad v_{2}\to v_{6}\quad v_{3}\to v_{7}\quad v_{6}\to u_{6}\quad v_{10}\to u_{10}\\
v_{13}\to v_{9}\quad u_{5}\to v_{5}\quad u_{6}\to u_{7}\quad u_{8}\to u_{9}\quad u_{11}\to v_{11}
\end{gathered}\]
\noindent\textbf{$n=15$.}\par\nopagebreak\[
\begin{gathered}
u_{0}\to v_{0}\quad u_{1}\to v_{1}\quad u_{2}\to v_{2}\quad u_{3}\to v_{3}\quad u_{4}\to v_{4}\\
u_{5}\to v_{5}\quad u_{6}\to v_{6}\quad v_{0}\to v_{11}\quad v_{1}\to v_{12}\quad v_{2}\to v_{13}\\
v_{4}\to v_{8}\quad v_{5}\to v_{9}\quad v_{6}\to v_{10}\quad v_{8}\to u_{8}\quad v_{9}\to u_{9}\\
v_{12}\to u_{12}\quad v_{13}\to u_{13}\quad u_{7}\to v_{7}\quad u_{9}\to u_{10}\quad u_{10}\to u_{11}\\
u_{14}\to v_{14}
\end{gathered}\]
\noindent\textbf{$n=16$.}\par\nopagebreak\[
\begin{gathered}
u_{0}\to v_{0}\quad u_{1}\to v_{1}\quad u_{2}\to v_{2}\quad u_{3}\to v_{3}\quad u_{4}\to v_{4}\\
u_{5}\to v_{5}\quad v_{0}\to v_{12}\quad v_{1}\to v_{13}\quad v_{4}\to v_{8}\quad v_{5}\to v_{9}\\
v_{8}\to u_{8}\quad v_{9}\to u_{9}\quad v_{12}\to u_{12}\quad v_{13}\to u_{13}\quad u_{8}\to u_{7}\\
u_{9}\to u_{10}\quad u_{12}\to u_{11}\quad u_{13}\to u_{14}\quad u_{14}\to v_{14}\quad u_{15}\to v_{15}\\
v_{2}\to v_{6}\quad v_{3}\to v_{7}\quad v_{6}\to v_{10}\quad v_{7}\to v_{11}
\end{gathered}\]
\noindent\textbf{$n=17$.}\par\nopagebreak\[
\begin{gathered}
u_{0}\to v_{0}\quad u_{1}\to v_{1}\quad u_{2}\to v_{2}\quad u_{3}\to v_{3}\quad u_{4}\to v_{4}\\
u_{5}\to v_{5}\quad u_{6}\to v_{6}\quad v_{0}\to v_{13}\quad v_{1}\to v_{14}\quad v_{2}\to v_{15}\\
v_{4}\to v_{8}\quad v_{5}\to v_{9}\quad v_{6}\to v_{10}\quad v_{9}\to u_{9}\quad v_{10}\to u_{10}\\
v_{13}\to u_{13}\quad v_{14}\to u_{14}\quad u_{9}\to u_{8}\quad u_{10}\to u_{11}\quad u_{13}\to u_{12}\\
u_{14}\to u_{15}\quad u_{16}\to v_{16}\quad v_{3}\to v_{7}\quad v_{7}\to v_{11}\quad v_{8}\to v_{12}
\end{gathered}\]

\end{document}